\numberwithin{equation}{section}
\DeclareMathOperator{\Ker}{Ker}
\def\cA{\EuScript{A}}
\def\cM{\EuScript{M}}
\def\cR{\EuScript{R}}
\begin{document}
\mainmatter              % start of a contribution
\title{\bf Linearization for difference equations with infinite delay}
\titlerunning{ Linearization for delay difference equations }  % abbreviated title (for running head)
%                                     also used for the TOC unless
%                                     \toctitle is used
%
\author{Lokesh Singh\inst{1}
}

\authorrunning{Lokesh Singh} % abbreviated author list (for running head)
%
%%%% list of authors for the TOC (use if author list has to be modified)
%\tocauthor{Ivar Ekeland, Roger Temam, Jeffrey Dean, David Grove,
%Craig Chambers, Kim B. Bruce, and Elisa Bertino}
%
\institute{University of Rijeka, Croatia.\\
\email{lokesh.singh@uniri.hr},}

\maketitle              % typeset the title of the contribution

\begin{abstract}
In this article, we construct a conjugacy map for a linear difference equation with infinite delay and corresponding nonlinear perturbation. We also prove that the conjugacy map is one one with some additional conditions. As an application of our result, we show that the cases of (uniform) exponential dichotomy follow from our result.

\keywords{Delay Difference equations, Hartman-Grobman theorem,  Linearization.}
\end{abstract}
\section{Introduction}\label{sec:introduction}

The classical Hartman and Grobman theorem \cite{HP,GD} is a fundamental result in the local theory of differential equations and dynamical systems. This celebrated theorem provides a topological conjugacy (near a hyperbolic equilibrium point $x^*=0$) between the dynamics of a nonlinear differential equation $x'= Ax + f(x)$ (on a finite-dimensonal space) and the dynamics  corresponding to the  linear equation $x'= Ax$.  Later, this result was generalized by Pugh \cite{P} and Palis~\cite{Palis} to the  Banach space setting.  Some other improvements of the theorem are due to Reinfelds \cite{AR}, and Bates and Lu \cite{BL}. It is well known that in general, the conjugacy in the Grobman-Hartman theorem is  only locally Hölder continuous. On the other hand, there has been a significant amount of work concerned with formlating sufficient conditions under which the conjugacy exhibits higher regularity properties. Some of the early work in this direction is by Sternberg \cite{SS} and Belitskii \cite{BG,BG2}, while Rodrigues et. al. \cite{RMM}, ElBialy \cite{EM} and Zhang and Zhang along with Lu \cite{ZZ1,ZZ2,ZLZ} contributed recently.

 Palmer \cite{PK} proved the first version of the Grobman-Hartman theorem for nonautonomous differential equations by  assuming that the linear part admits an exponential dichotomy (see~Subsection~(\ref{exp_dich}) for definition). Later Shi and Xiong \cite{SXK} obtained an improvement on Palmer's result and established Hölder continuity of conjugacies. The version of Palmer's theorem in the case of discrete time was first  established by Aulbach and Wanner \cite{AW}. Castañeda and Robledo \cite{CR} discussed the regularity of conjugacy map for nonautonomous differential equations. More recently, Barreira and Valls \cite{BC}  dealth with the case when the linear part admits a nonuniform exponential dichotomy. 
 More recently, Dragičević, Zhang and Zhang \cite{DZZ,DZZ2} discussed the higher regularity of conjugacies in the nonautonomous setting. 
 Some other important contributions to nonautonomous linearization are given in~\cite{DB, DBV, DD}.

For delay differential equations, due to several complexities, there has not been much progress. As Sternberg mentioned in \cite{SN2}, in the case of delay differential equations, the solutions form semiflows (instead of flows) and sometimes solutions may not exist in backward time. Later, he obtained Hartman-Grobman theorem \cite{SN} for finite delay differential equations in the  finite-dimensional setting under some restrictive conditions. Namely, he assumed  the existence of a compact global attractor on which the semiflow is one-to -one. Benkhalti and Ezzinbi \cite{BE} obtained some improvement over Sternberg's result. Farkas \cite{FG}, proved the Hartman-Grobman theorem for autonomous delay differential equations admitting uniform exponential dichotomy in finite-dimensional settings. In recent work, Barreira and Valls \cite{BV} extended the result in continuous case of nonautonomous differential equations with finite delay in Banach space setting. They also assumed that the linear part admits a  uniform exponential dichotomy.  

In this work, our objective is to obtain a conjugacy map for nonautonomous discrete equations with infinite delay and corresponding nonlinear delay equations given in~(\ref{dis lin eq}) and~(\ref{dis semilin eq}) respectively. To obtain the conjugacy map, we assume a sufficient condition in terms of a Green type function (given in~(\ref{def: G})) associated with linear delay equation. With some additional assumptions, we also proved that the obtained conjugacy map is one-one. As an applications of our result, we showed that if the linear delay equation admits (uniform) exponential dichotomy, then the corresponding Green type function satisfies the assumptions of our Theorem~\ref{thm_2} and the result follows. With respect to existing work, our work has novelty in two senses, we are considering infinite delay difference equation and we proved the result with more general conditions.

This article is organized as follows. We describe our setup in Section \ref{pri}. In Section \ref{main}, we prove our main result and give an application of Theorem~\ref{thm_2}.

\section{Preliminaries}\label{pri}

Let $\mathbb{Z},\mathbb{Z}^+$ and $\mathbb{Z}^-$ denote the set of all integers, set of nonnegative integers and set of nonpositive integers, respectively. Let $(X,|\cdot|)$ be an arbitrary Banach space. Given a sequence $x: \mathbb{Z} \to X$ and $m \in \mathbb{Z}$, we define $x_m: \mathbb{Z}^- \to X$ by 
\begin{align*}
    x_m(j) = x(m+j) \quad \mbox{for all } j \in \mathbb{Z}^-.
\end{align*}
Next we consider a Banach space $(\mathcal{B},\| \cdot\|_{\mathcal{B}})$ of all sequences $\phi: \mathbb{Z}^- \to X$ satisfying the following assumption:
\begin{enumerate}
    \item[(A)] There exists $J>0$, and $K,M :\mathbb{Z}^+ \to  [0,\infty)$, such that if $x : \mathbb{Z} \to X$ with $x_0 \in \mathcal{B}$, then for all $n \in \mathbb{Z}^+$,
    \begin{align*}
        x_n \in \mathcal{B} \quad \mbox{and} \quad J |x(n)| \leq \|x_n\|_{\mathcal{B}} \leq K(n) \sup_{0 \leq j \leq n} |x(j)| + M(n) \|x_0\|_{\mathcal{B}}.
    \end{align*}
\end{enumerate}
As noted in \cite{SM}, one such space satisfying the assumption (A) is Banach space $(\mathcal{B}^\beta, \|\cdot \|_{\mathcal{B}^\beta})$, defined by
\begin{align*}
    \mathcal{B}^\beta = \Big\{ \phi: \mathbb{Z}^- \to X \Big| \|\phi\|_{\mathcal{B}^\beta} < \infty   \Big\}, \quad \|\phi\|_{\mathcal{B}^\beta} := \sup_{j \in \mathbb{Z}^-} [\,|\phi(j)| e^{\beta j} ] ,
\end{align*}
where $\beta$ is a real constant.
 Observe that the Banach Space $(\mathcal{B}^\beta, \|\cdot \|_{\mathcal{B}^\beta})$ satisfies (A) with $J= 1, M(n)= e^{-\beta n}$ and $K(n)=1$ if $\beta \geq 0$ and $K(n)= e^{-\beta n}$ if $\beta < 0.$

Given a sequence of linear operators $A_m : \mathcal{B} \to X, \, m \in \mathbb{Z}^+$, we consider a linear difference equation given by,
\begin{align}\label{dis lin eq}
     x(m+1) = A_mx_m, \qquad \mbox{for all } m \in \mathbb{Z}^+. 
\end{align}
Given $(n,\phi) \in \mathbb{Z}^+ \times \mathcal{B}$, there exists a unique sequence $x: \mathbb{Z} \to X$ such that $x_n= \phi$ and the sequence $x(m)$  satisfies equation (\ref{dis lin eq}) for all $m \geq n \geq 0$. The sequence $x$ is called a solution of equation (\ref{dis lin eq}) through $(n,\phi)$ and is denoted by $x = x(\cdot ; n,\phi)$. Now we define a two parameter solution operator $\cA(m,n):\mathcal{B} \to \mathcal{B}$ of the linear equation (\ref{dis lin eq}) by,
\begin{align}\label{sol op}
   \cA(m,n) \phi = x_m(\cdot ; n, \phi) \qquad \mbox{ for all } m \geq n \geq 0, \mbox{ and } \phi \in \mathcal{B}. 
\end{align}
Clearly, $(\cA(m,n))_{m \geq n \geq 0}$ is a discrete evolution family corresponding to linear equation (\ref{dis lin eq}) and it satisfies
 \begin{align*}
     \cA(n,n) &= Id_\mathcal{B} \qquad \quad \mbox{for all } n \geq 0, \\
     \cA(m,k) \cA(k,n) &= \cA(m,n) \quad \mbox{ for all } m \geq k \geq n \geq 0.
 \end{align*}Here $Id_\mathcal{B}$ denotes the identity operator on $\mathcal{B}$. Next we consider a sequence of projection maps $(P_n)_{n \in \mathbb{Z}^+} $ on $\mathcal{B}$ such that 
 \begin{align}\label{projection maps}
     P_m\cA(m,n) = \cA(m,n)P_n, \quad \mbox{ for all } m \geq n \geq 0,
 \end{align}
 and
 \begin{align*}
      \cA(m,n) \big|_{\Ker P_n} : \Ker P_n \to \Ker P_m \quad \mbox{ is invertible for } m \geq n \geq 0.
 \end{align*}
 Also, for $ n \geq m \geq 0$, we denote $\cA(m,n) := \big( \cA(n,m)\big|_{\Ker P_m} \big)^{-1} : \Ker P_n \to \Ker P_m$ and $Q_m := Id_{\mathcal{B}} - P_m$ for each $m \in \mathbb{Z}^+$. Now, for $m,n \in \mathbb{Z}^+$, we define another operator 
\begin{equation}\label{def: G}
\mathcal G(m, n):=\begin{cases}
\cA(m, n)P_n & \text{for $m \geq n$;}\\
-\cA(m, n)Q_n & \text{for $m< n$.}
\end{cases}
\end{equation} 
This operator is usually called Green operator. With respect to projection maps $(P_n)_{n \in \mathbb{Z}^+}$, we have $\mathcal{B}= E_n \oplus F_n$ for each $n \in \mathbb{Z}^+$. Here $E_n$ and $F_n$ are ranges of projections $P_n$ and $Q_n$ respectively. Now, let us consider a space $ \cM$ consisting of continuous functions,
\begin{align*}
    \eta: \{ (n, \phi): n \in \mathbb{Z}^+, \phi \in F_n \} \to \mathcal{B}
\end{align*} such that 
\begin{align*}
    \| \eta \|_\infty := \sup \{ \| \eta(n, \phi) \|_{\mathcal{B}}: n \in \mathbb{Z}^+ \mbox{ and } \phi \in F_n  \} < \infty. 
\end{align*}
Clearly, $(\cM,\|\cdot\|_\infty)$ is a Banach space. We also write
\begin{align*}
    \eta^n= \eta(n, \cdot) \quad \mbox{and } \quad h^n= Id_{F_n} + \eta^n.
\end{align*}
Here $Id_{F_n}$ is identity map on subspace $F_n$.
Furthermore, for each $m \in \mathbb{Z}^+$, let $f_m \colon \mathcal{B} \to X$, be a sequence of  maps such that $f_m(0)=0$ for all $ m \in \mathbb{Z}^+$ and there exist numbers $c_m>0$ satisfying
\begin{equation}\label{lipn disc}
|f_m(\phi)-f_m(\psi)| \le c_m \min{(1,\|\phi - \psi\|_{\mathcal{B}})} ,
\end{equation}
for every  $m\in \mathbb{Z}^+$ and $ \phi,\psi \in \mathcal{B} $.
Finally, we consider a \emph{semilinear difference equation} given by
\begin{equation}\label{dis semilin eq}
x(m+1)=A_m x_m + f_m(x_m) \qquad \text{ for all } m\in \mathbb{Z}^+.
\end{equation}
Given $(n, \phi) \in \mathbb{Z}^+ \times \mathcal{B}$, there exists a unique sequence $x:\mathbb{Z} \to X$ such that $x_n= \phi$ and $x$ satisfies the semilinear difference equation (\ref{dis semilin eq}). We write the solution of equation (\ref{dis semilin eq}) in terms of operator $\cR(m,n)$ given by,
\begin{align}\label{general solu.}
    x_m= \cR(m,n) x_n \qquad \mbox{ for all } m \geq n \geq 0.
\end{align}
Now we recall \cite{SM} and \cite{DP} to give the variation of constants formula for a difference equation given by
\begin{align}\label{diff. equ}
    x(m+1) = A_m x_m + p_m, 
\end{align}
where $(p_m)_{m \in \mathbb{Z}^+}$ is a sequence in $X$. Define $\Gamma : \mathbb{Z}^- \to \mathcal{L}(X)$ by
\begin{align*}
    \Gamma(j) = \begin{cases}
Id_X & \text{for $j=0$;}\\
0_X & \text{for $j<0$.}
\end{cases}
\end{align*}
The symbol $\mathcal{L}(X)$ denotes the space of bounded linear operators in X. For $v \in X$, define $\Gamma v: \mathbb{Z}^- \to X$ by
\begin{align*}
    (\Gamma v) (j) := \Gamma(j)v = \begin{cases}
v & \text{for $j=0$;}\\
0 & \text{for $j<0$.}
\end{cases}
\end{align*}
If $x: \mathbb{Z} \to X$ is defined by $x(j)= 0 $ for $j \leq 0$ and $x(j)= v$ for $j >0$, then $x_0 =0$ and $x_1 = \Gamma v$. Since $x_0= 0 \in \mathcal{B}$, by assumption (A), we have that $\Gamma v = x_1 \in \mathcal{B}$ and 
\begin{align}
    \|\Gamma v\|_\mathcal{B} \leq K(1) |v|.
\end{align}
\begin{theorem}\cite{SM}\label{thm:1}
Let $\phi \in \mathcal{B}$. A sequence $x: \mathbb{Z} \to X$ is a solution of $(\ref{diff. equ})$ with initial value $x_0 = \phi$ if and only if for $m \geq 0$, the segment $x_m$ satisfy the following relation in $\mathcal{B}$,
\begin{align}
    x_m = \cA(m,0)\phi + \sum_{k=0}^{m-1} \cA(m,k+1) (\Gamma p_{k}), \quad m \geq 0.
 \end{align}
\end{theorem}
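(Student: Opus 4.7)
The plan is to prove both directions by induction on $m \geq 0$, after isolating a one-step ``transition identity''
$$x_{m+1} = \cA(m+1,m)\,x_m + \Gamma p_m,$$
which encodes how the inhomogeneity $p_m$ enters the segment evolution. Once this identity is in hand, the variation of constants formula follows by iterating $\cA$ and using the evolution property $\cA(m+1,k+1)=\cA(m+1,m)\cA(m,k+1)$, while the converse direction is just reading the identity off from the claimed formula and evaluating at $j=0$.

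First I would establish the transition identity. Given a solution $x$ of (\ref{diff. equ}), I would compare the segments $x_{m+1}$ and $\cA(m+1,m)x_m$ entrywise on $\mathbb{Z}^-$. For $j \leq -1$ both coincide, since $\cA(m+1,m)x_m$ at $j$ is just the corresponding past value $x_m(j+1)=x(m+1+j)=x_{m+1}(j)$ (the linear equation does not alter values already in the past of the segment). At $j=0$ one has $(\cA(m+1,m)x_m)(0) = A_m x_m$, whereas $x_{m+1}(0)=x(m+1)=A_m x_m + p_m$. The difference is concentrated at $j=0$ and equals $p_m$, which is precisely $\Gamma p_m \in \mathcal{B}$. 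Note that $\Gamma p_m$ lies in $\mathcal{B}$ by the remark just before the theorem (following from assumption (A)).

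Next I would perform the induction. For $m=0$ the formula reduces to $x_0 = \cA(0,0)\phi = \phi$, which is the initial condition. For the inductive step, I would apply $\cA(m+1,m)$ to the assumed expression for $x_m$, add $\Gamma p_m$, and use the cocycle property $\cA(m+1,m)\cA(m,k+1)=\cA(m+1,k+1)$ together with $\cA(m+1,m+1) = \mathrm{Id}_\mathcal{B}$ to absorb the extra term into the sum with upper index extended to $k=m$. For the converse, assuming the formula holds, the cocycle property immediately gives $x_{m+1} - \cA(m+1,m)x_m = \Gamma p_m$; evaluating both sides at $j=0$ yields $x(m+1) - A_m x_m = p_m$, which is (\ref{diff. equ}).

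The main subtlety is not computational but conceptual: verifying the transition identity at the level of the infinite-delay segments in $\mathcal{B}$. One must keep clear the distinction between the solution operator $\cA(m+1,m)$ acting on the whole segment (which ``shifts'' past values and appends $A_m x_m$ as the new entry) and the actual solution $x$ of the perturbed equation (which appends $A_m x_m + p_m$). Once this is pinned down, and once one checks that $\Gamma p_m \in \mathcal{B}$ with $\|\Gamma p_m\|_\mathcal{B}\leq K(1)|p_m|$ so that all sums are well-defined in $\mathcal{B}$, the rest is a direct induction.
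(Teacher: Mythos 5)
Your argument is correct: the one-step transition identity $x_{m+1}=\cA(m+1,m)x_m+\Gamma p_m$ (verified entrywise on $\mathbb{Z}^-$, with the discrepancy concentrated at $j=0$ and $\Gamma p_m\in\mathcal{B}$ guaranteed by assumption (A)), followed by induction using the cocycle property, is exactly the standard proof of this variation-of-constants formula. The paper itself gives no proof of Theorem~\ref{thm:1} --- it is quoted from Murakami \cite{SM} --- so there is nothing to compare against, but your reconstruction is the argument one finds there and is complete, including the converse direction obtained by evaluating the identity at $j=0$.
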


As a corollary of above result, we have that $x:\mathbb{Z} \to X$ is a solution of (\ref{dis semilin eq}) if and only if $\phi = x_0 \in \mathcal{B}$ and 
\begin{align*}
    x_m = \cA(m,0) \phi + \sum_{k=0}^{m-1} \cA(m,k+1) (\Gamma f_{k} (x_{k})), \quad m \geq 0.
\end{align*}
Let $x= x(\cdot; 0, \phi)$ be a solution of equation (\ref{dis semilin eq}) for $\phi \in \mathcal{B}$, then using constants of variation formula from Theorem \ref{thm:1}, for $m \geq n \geq 0$, we have
\begin{align*}
    x_m & =  \cA(m,0) \phi + \sum_{k=0}^{m-1} \cA(m,k+1) (\Gamma f_{k} (x_{k})) \\
    & = \cA(m,0) \phi + \sum_{k=0}^{n-1} \cA(m,k+1) (\Gamma f_{k} (x_{k})) + \sum_{k=n}^{m-1} \cA(m,k+1) (\Gamma f_{k} (x_{k})) \\
    & = \cA(m,n) \big\{  \cA(n,0) \phi + \sum_{k=0}^{n-1} \cA(n,k+1) (\Gamma f_{k} (x_{k})) \big\} \\ & \hspace{6cm}+ \sum_{k=n}^{m-1} \cA(m,k+1) (\Gamma f_{k} (x_{k})) \\
    & = \cA(m,n) x_n + \sum_{k=n}^{m-1} \cA(m,k+1) (\Gamma f_{k} (x_{k})),
\end{align*}
for all $m \geq n \geq 0 $. Therefore, we obtain that $x: \mathbb{Z} \to X$ is a solution of (\ref{dis semilin eq}) if and only if $x_0 \in  \mathcal{B}$ and for $m \geq n \geq 0$ 
\begin{align}\label{variation of const.}
    x_m = \cA(m,n) x_n + \sum_{k=n}^{m-1} \cA(m,k+1) (\Gamma f_{k} (x_{k})).
\end{align}
\section{Main Result} \label{main}
The following theorem is our main result.
\begin{theorem}\label{thm_2}
Assume that the semilinear equation $(\ref{dis semilin eq})$ admits 
\begin{equation}\label{cc disc}
q:=\sup_{m\in \mathbb{Z}^+} \bigg ( \sum_{n\in \mathbb{Z}^+}c_{n} \| \mathcal G(m,n+1) \|  \bigg ), \quad \mbox{ with }\, K(1)q <1 .
\end{equation}
Then there exists a function $ \eta \in \mathcal{M} $, such that for every $m \geq n \geq 0$
 \begin{align}\label{result}
     h^m \circ \cA(m,n) = \cR(m,n) \circ h^n, \quad \mbox{ on } F_n. 
 \end{align}
Moreover, each map $h^n$ is one-to-one provided that the condition $(\ref{cc disc})$ holds with a constant sequence of Lipschitz constants $(c_n)_{n \in \mathbb{Z}^+}$.
\end{theorem}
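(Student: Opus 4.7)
The plan is to produce $\eta$ as the unique fixed point of a Lyapunov--Perron type operator on $\cM$ built from the Green operator $\mathcal{G}$, and then to read off both the conjugacy and the injectivity from this operator equation. Concretely, for $\eta\in\cM$ I set $h^k:=Id_{F_k}+\eta^k$ and define
\[
(T\eta)^n(\phi) := \sum_{k=0}^{\infty} \mathcal{G}(n,k+1)\,\Gamma f_k\bigl(h^k(\cA(k,n)\phi)\bigr),\qquad n\in\mathbb{Z}^+,\ \phi\in F_n.
\]
The motivation for this choice comes from the following observation: applying the variation-of-constants formula (\ref{variation of const.}) to $\cR(m,n)h^n(\phi)$ and demanding the conjugacy $h^m(\cA(m,n)\phi)=\cR(m,n)h^n(\phi)$, the identity (\ref{result}) is equivalent to
\[
\eta^m(\cA(m,n)\phi)-\cA(m,n)\eta^n(\phi) = \sum_{k=n}^{m-1}\cA(m,k+1)\,\Gamma f_k\bigl(h^k \cA(k,n)\phi\bigr).
\]
To verify that a fixed point of $T$ satisfies this, I would do a case analysis for $\cA(m,n)\mathcal{G}(n,k+1)$: it equals $\mathcal{G}(m,k+1)$ outside the range $n\le k\le m-1$, and equals $\mathcal{G}(m,k+1)-\cA(m,k+1)$ inside that range. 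Substituting into $\cA(m,n)(T\eta)^n$ and telescoping then yields exactly the desired right-hand side.

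With this algebraic fact settled, the contraction step is routine. Assumption~(A) gives $\|\Gamma v\|_{\mathcal{B}}\leq K(1)|v|$, while (\ref{lipn disc}) with $\psi=0$ gives $|f_k(\phi)|\leq c_k$, so
\[
\|(T\eta)^n(\phi)\|_{\mathcal{B}}\leq K(1)\sum_{k=0}^{\infty}\|\mathcal{G}(n,k+1)\|\,c_k\leq K(1)q,
\]
placing $T\eta$ in $\cM$; its continuity in $\phi$ follows from continuity of each $f_k,\cA(k,n),\eta^k$ together with uniform convergence of the defining series. A termwise application of (\ref{lipn disc}), using $\min(1,\|\cdot\|_{\mathcal{B}})\le\|\eta_1-\eta_2\|_\infty$, then yields $\|T\eta_1-T\eta_2\|_\infty\leq K(1)q\|\eta_1-\eta_2\|_\infty$, and the hypothesis $K(1)q<1$ produces a unique fixed point $\eta\in\cM$ by Banach's theorem. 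The derivation in the previous paragraph shows that this $\eta$ realizes (\ref{result}).

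For the injectivity clause, I would suppose $h^n(\phi_1)=h^n(\phi_2)$ with $\phi_1,\phi_2\in F_n$ and argue directly from the fixed-point equation:
\[
\eta^n(\phi_1)-\eta^n(\phi_2) = \sum_{k=0}^{\infty}\mathcal{G}(n,k+1)\,\Gamma\bigl(f_k(h^k\cA(k,n)\phi_1)-f_k(h^k\cA(k,n)\phi_2)\bigr).
\]
For $k\geq n$, (\ref{result}) gives $h^k\cA(k,n)\phi_i=\cR(k,n)h^n(\phi_i)$, and the hypothesis $h^n(\phi_1)=h^n(\phi_2)$ kills those terms. For $k<n$, $\mathcal{G}(n,k+1)=\cA(n,k+1)P_{k+1}$ sends $\mathcal{B}$ into $E_n$, because $\cA$ commutes with the projections. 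Thus the right-hand side lies in $E_n$; on the other hand it equals $\phi_2-\phi_1\in F_n$, and the splitting $\mathcal{B}=E_n\oplus F_n$ forces $\phi_1=\phi_2$. The main obstacle throughout is the case analysis for $\cA(m,n)\mathcal{G}(n,k+1)$ and keeping signs and index ranges straight when telescoping; once that bookkeeping is in hand, the contraction, the conjugacy, and the injectivity all follow by direct manipulation of the fixed-point equation.
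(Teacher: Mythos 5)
Your proposal is correct, and the construction of $\eta$ coincides with the paper's: the same Lyapunov--Perron operator built from $\mathcal G$, the same contraction estimate via $\|\Gamma v\|_{\mathcal B}\le K(1)|v|$ and $|f_k(\phi)|\le c_k$, and the same reduction of (\ref{result}) to the identity $\eta^m(\cA(m,n)\phi)-\cA(m,n)\eta^n(\phi)=\sum_{k=n}^{m-1}\cA(m,k+1)\Gamma f_k\bigl(h^k\cA(k,n)\phi\bigr)$ followed by the variation-of-constants characterization of $\cR$ and induction on $m$. The only cosmetic difference there is that you obtain this identity from the case analysis $\cA(m,n)\mathcal G(n,k+1)=\mathcal G(m,k+1)$ off the window $n\le k\le m-1$ and $\cA(m,n)\mathcal G(n,k+1)=\mathcal G(m,k+1)-\cA(m,k+1)$ inside it (which checks out), whereas the paper splits the fixed-point equation into its $P$- and $Q$-components and telescopes each piece separately; the two computations are identical in content. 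Where you genuinely depart from the paper is the injectivity step, and your route is both simpler and stronger. The paper first proves a lemma that $\|\mathcal G(p,n)\|\to 0$ as $n\to\infty$ --- this is precisely where the constancy of $(c_n)$ is used, to extract summability of $n\mapsto\|\mathcal G(p,n+1)\|$ from (\ref{cc disc}) --- and then derives a contradiction between the resulting unboundedness of $n\mapsto\cA(n,p)(\phi_1-\phi_2)$ and the bound $2\|\eta\|_\infty$ on the right-hand side of the fixed-point difference. You instead note that the terms with $k\ge n$ in that difference vanish because the already-established conjugacy forces $h^k\cA(k,n)\phi_1=\cR(k,n)h^n(\phi_1)=\cR(k,n)h^n(\phi_2)=h^k\cA(k,n)\phi_2$, while the finitely many terms with $k<n$ lie in $E_n$ since $\cA(n,k+1)P_{k+1}=P_n\cA(n,k+1)$; as the difference also equals $\phi_2-\phi_1\in F_n$ and $E_n\cap F_n=\{0\}$, injectivity follows. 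This argument is sound and never invokes the hypothesis that $(c_n)$ is constant, so it in fact establishes injectivity of every $h^n$ under (\ref{cc disc}) alone --- a slightly stronger conclusion than the theorem states.
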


First we prove a lemma which will be used to establish that $h^n$ is one-one for $n \in \mathbb{Z}^+$. Set
\begin{align}\label{new cc disc 1}
    a_{m,n} := \|\mathcal{G}(m,n)\|.
\end{align}
\begin{lemma}
Assume that the sequence of Lipschitz constants $(c_n)_{n \in \mathbb{Z}^+} $ in~$(\ref{lipn disc})$ is constant and that~$(\ref{cc disc})$  holds.  Then,  for each fixed $m \in \mathbb{Z}^+$, we have that 
\begin{align*}
   \lim_{n \to \infty}a_{m,n}=0.
\end{align*}
\end{lemma}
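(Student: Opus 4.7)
The plan is short, because the lemma ultimately reduces to the elementary fact that the general term of a convergent series of nonnegative numbers must tend to zero. The strategy is simply to exploit that, when the sequence of Lipschitz constants $(c_n)$ is constant with common value $c$, the weight $c_n$ in the series $(\ref{cc disc})$ can be pulled out uniformly in $n$.

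First I would dispose of the degenerate case $c=0$: then each $f_n$ vanishes, the semilinear equation $(\ref{dis semilin eq})$ coincides with the linear one $(\ref{dis lin eq})$, and the conjugacy map $h^n$ of Theorem~\ref{thm_2} may be taken to be the identity on $F_n$, which is automatically one-to-one; no estimate on $\|\mathcal{G}\|$ is needed to run the proof of Theorem~\ref{thm_2} in that case. So I may and will assume $c>0$.

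Fixing an arbitrary $m\in\mathbb{Z}^+$, I would rewrite hypothesis $(\ref{cc disc})$ for this particular $m$ with $c_n\equiv c$ as
$$c\sum_{n\in\mathbb{Z}^+} \|\mathcal{G}(m,n+1)\| \le q <\infty,$$
so that $\sum_{n\ge 0} a_{m,n+1}\le q/c$ is a convergent series of nonnegative terms. The $n$th term test then yields $a_{m,n+1}\to 0$, and reindexing gives $\lim_{n\to\infty}a_{m,n}=0$, as claimed. I expect no real obstacle here. The one structural point worth flagging is that the argument requires dividing by $c$, so the assumption that $(c_n)$ is constant (equivalently, bounded below away from zero) is essential: without it, the weighted sum $\sum c_n\|\mathcal{G}(m,n+1)\|$ could well be finite while some individual norms $\|\mathcal{G}(m,n+1)\|$ stay large at indices where $c_n$ is small, and the conclusion would fail.
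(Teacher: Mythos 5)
Your proof is correct and follows essentially the same route as the paper: pull the constant $c$ out of the sum in (\ref{cc disc}), observe that $\sum_{n}a_{m,n+1}$ is then a convergent series of nonnegative terms for each fixed $m$, and apply the $n$th term test. Your aside on the case $c=0$ is harmless but unnecessary, since (\ref{lipn disc}) already requires $c_m>0$.
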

\begin{proof}
Let $C >0 $ be such that $c_n = C$ for all $n \in \mathbb{Z}^+$.
By~(\ref{cc disc}), it follows that 
\begin{align*}
    \sup_{m\in \mathbb{Z}^+} \bigg ( \sum_{n\in \mathbb{Z}^+}c_{n} \| \mathcal G(m,n+1) \|  \bigg ) = C \sup_{m \in \mathbb{Z}^+}\Big(\sum_{n \in \mathbb{Z}^+} a_{m,n+1}\Big) < \infty.
\end{align*}
Therefore, for each fixed $m \in \mathbb{Z}^+$,
\begin{align*}
    \sum_{n \in \mathbb{Z}^+} a_{m,n+1} < \infty.
\end{align*}
Hence, $\lim_{n \to \infty} a_{m,n+1} =0$ for each fixed $m \in \mathbb{Z}^+$. Equivalently, 
\begin{align}\label{cal 9}
    \lim_{n \to \infty} a_{m,n} =0 \qquad \mbox{ for each fixed } m \in \mathbb{Z}^+.
\end{align}
\end{proof}

Now we give our proof of Theorem \ref{thm_2}.
\begin{proof}
Let  us consider a map $F: \cM \to \cM$ given by
\begin{align}
    F(\eta)(n,\phi) = \sum_{m \in \mathbb{Z}^+} \mathcal{G}(n,m+1)\Gamma f_{m}\big(\cA(m,n)\phi+ \eta^{m}(\cA(m,n)\phi)\big),
\end{align}
where $\eta \in \cM$ and $(n,\phi) \in \mathbb Z^+ \times F_n$. Furthermore, since $f_m(0)= 0$ for each $m \in \mathbb{Z}^+$ and using~(\ref{lipn disc}), we have,
\begin{align*}
|f_m(\phi)| \leq c_m \min{(1, \| \phi \|_{\mathcal{B}})} \leq c_m, \quad \mbox{ for all } \phi \in \mathcal{B}.
\end{align*}
Therefore, using above estimate,
\begin{align*}
    \| F(\eta)\|_\infty \leq \sup_{n \in \mathbb{Z}^+}\Big( K(1)\sum_{m\in \mathbb{Z}^+} c_{m} \| \mathcal{G}(n,m+1) \|\Big)= K(1)q < \infty.
\end{align*}
Therefore, the operator $F$ is well defined. We now claim that $F$ is a contraction map. For each $\eta,\xi \in  \cM$ we have
\begin{align*}
    &|F(\eta)(n,\phi)  - F(\xi)(n,\phi)| \\ & \leq K(1) \sum_{m \in \mathbb{Z}^+} \| \mathcal{G}(n,m+1)\| | f_{m}\big(\cA(m,n)\phi+ \eta^{m}(\cA(m,n)\phi)\big) - \\ & \hspace{5cm} f_{m}\big(\cA(m,n)\phi + \xi^{m}(\cA(m,n)\phi)\big) | \\
    & \leq K(1) \sum_{m \in \mathbb{Z}^+} \| \mathcal{G}(n,m+1)\| c_m | \eta^{m}( \cA(m,n)\phi ) -\xi^{m}(\cA(m,n)\phi)| \\
    & \leq K(1) \sum_{m \in \mathbb{Z}^+} c_m \| \mathcal{G}(n,m+1)\| \| \eta - \xi\|_\infty.
\end{align*}
Therefore,
\begin{align*}
    \| F(\eta)  - F(\xi) \|_{\infty}  \leq K(1) q \,\|\eta -\xi\|_\infty.
\end{align*}
As $K(1)q < 1$, the operator $F$ is a contraction map. Therefore it has a unique fixed point function, say $\eta$ i.e. $F(\eta)= \eta$.
Now, for $\phi \in F_n$, we have
\begin{align}\label{cal 0}
  \eta^n(\phi) & = \sum_{m \in \mathbb{Z}^+} \mathcal{G}(n,m+1) \Gamma f_{m}\big(\cA(m,n)\phi+ \eta^{m}(\cA(m,n)\phi)\big) \nonumber \\
    & = \sum_{m \in \mathbb{Z}^+} \mathcal{G}(n,m+1) \Gamma f_{m}\big( h^{m} (\cA(m,n)\phi) \big).
\end{align}
Note that,
\begin{align}\label{cal 1}
    Q_n\eta^n(\phi) = - \sum_{m \geq n} \cA(n,m+1) Q_{m+1} \Gamma f_{m}\big( h^{m} (\cA(m,n) \phi) \big).
\end{align}
Take $0 \leq p \leq n$. By~(\ref{cal 1}),  for $F_p \ni \psi = \cA(p,n)\phi $ we have that 
\begin{align}\label{cal 2}
    Q_p \eta^p(\psi) = - \sum_{m \geq p} \cA(p,m+1)Q_{m+1} \Gamma f_{m}\big( h^{m}(\cA(m,p)\psi) \big).
\end{align}
Also from (\ref{cal 1}),
\begin{align*}
    \cA(p,n) Q_n\eta^n(\phi) = - \sum_{m \geq n} \cA(p,m+1) Q_{m+1}\Gamma f_{m}\big( h^{m}(\cA(m,n)\phi) \big) .
\end{align*}
Using (\ref{cal 2}) and the above relation, we obtain that
\begin{align*}
    \cA(p,n) Q_n\eta^n(\phi) = Q_p\eta^p & (\cA(p,n) \phi) \\ & + \sum_{m =p}^{n-1} \cA(p,m+1) Q_{m+1} \Gamma f_{m}\big( h^{m} (\cA(m,n)\phi) \big).
\end{align*}
Equivalently,
\begin{align}\label{cal 3}
     Q_n\eta^n(\phi) = \cA(n,p) Q_p\eta^p & (\cA(p,n) \phi) \nonumber \\ & + \sum_{m =p}^{n-1} \cA(n,m+1) Q_{m+1} \Gamma f_{m}\big( h^{m} (\cA(m,n)\phi) \big).
\end{align}
Similarly, from (\ref{cal 0}), it follows that 
\begin{align}\label{cal 4}
    P_n\eta^n(\phi) =  \sum_{m < n} \cA(n,m+1) P_{m+1} \Gamma f_{m}\big( h^{m}(\cA(m,n)\phi) \big).
\end{align}
For $ F_p \ni \psi = \cA(p,n)\phi $, we have that 
\begin{align}\label{cal 5}
    P_p\eta^p(\psi) =  \sum_{m < p} \cA(p,m+1) P_{m+1} \Gamma f_{m}\big( h^{m}(\cA(m,p)\psi) \big).
\end{align}
Hence, 
\begin{align}\label{cal 6}
P_n \eta^n(\phi) = \cA(n,p) P_p & \eta^p (\cA(p,n) \phi) \nonumber \\ & +   \sum_{m = p}^{n-1} \cA(n,m+1) P_{m+1} \Gamma f_{m}\big( h^{m} (\cA(m,n)\phi) \big).    
\end{align}
By adding~(\ref{cal 3}) and (\ref{cal 6}), we obtain that 
\begin{align*}
    \eta^n(\phi) = \cA(n,p) \eta^p (\cA(p,n) \phi) +  \sum_{m = p}^{n-1} \cA(n,m+1) \Gamma f_{m}\big( h^{m} (\cA(m,n)\phi) \big).
\end{align*}
Since $h^n = Id_{F_n} + \eta^n$, we conclude that 
\begin{align*}
    h^n(\phi) = \cA(n,p) \big( h^p( & \cA(p,n)\phi) \big) +  \sum_{m= p }^{n-1} \cA(n,m+1)\Gamma f_{m}\big( h^{m}(\cA(m,n)\phi) \big).
\end{align*}
Moreover, 
\begin{align}\label{cal 7}
    h^n(\cA(n,p)\psi) = \cA(n,p) h^p(\psi) + \sum_{m=p}^{n-1} \cA(n,m+1) \Gamma f_m(h^m(\cA(m,p) \psi))
\end{align}
On the other hand, by the variation of constants formula in (\ref{variation of const.}) and the definition of $\mathcal{R}(m,n)$ in (\ref{general solu.}), for all $n \geq p \geq 0$, we have,
\begin{align*}
    \mathcal{R}(n,p) h^p (\psi) = \cA(n,p) h^p (\psi)  + \sum_{m=p}^{n-1} \cA(n,m+1) \Gamma f_m \big( \mathcal{R}(m,p)h^p (\psi) \big).
\end{align*}
Comparing above relation with~(\ref{cal 7}), and using strong induction on $n$, we obtain,
\begin{align}\label{cal 8}
    \mathcal{R}(n,p) h^p ( \psi) = h^n(\cA(n,p) \psi), \quad \mbox{ for all } n \geq p \geq 0 \mbox{ and } \psi \in F_p.
\end{align}
Hence, we proved that~(\ref{result}) holds.

Now we show that the map $h^p$ is one-to-one for $p \in \mathbb{Z}^+$.
Observe that 
\begin{align*}
    \|\mathcal{G}(p,n)\| = \|\cA(p,n)Q_n\| = a_{p,n} \quad \mbox{ for all } n >p \geq 0.
\end{align*}
Now let $n > p \geq 0$ and $\phi_1, \phi_2 \in F_p$ such that
\begin{align*}
    h^p(\phi_1) = h^p(\phi_2).
\end{align*}
Using above relation and~(\ref{cal 8}), we have,
\begin{align*}
     h^n(\cA(n,p) \phi_1) =  h^n(\cA(n,p) \phi_2).
\end{align*}
Thus,
\begin{align}\label{cal 10}
    \cA(n,p)(\phi_1- \phi_2) = - \big( \eta^n(\cA(n,p) \phi_1) - \eta^n(\cA(n,p) \phi_2) \big).
\end{align}
Note that, as $\phi_1,\phi_2 \in F_p$,
\begin{align*}
\phi_1 - \phi_2 = \cA(p,n) \cA(n,p)Q_p  \big( \phi_1 - \phi_2 \big) = \cA(p,n) Q_n \cA(n,p)(\phi_1-\phi_2).
\end{align*}
Also, $\cA(p,n)Q_n$ is a nonzero map in $\mathcal{B},$ as $\cA(p,n)\big|_{F_n}$ is invertible.
Therefore, we have,
\begin{align*}
    \| \cA(n,p) (\phi_1 - \phi_2) \|_{\mathcal B} & \geq \frac{\| \phi_1 - \phi_2 \|_{\mathcal{B}}}{ \| \cA(p,n) Q_n\| } \\
    & = \frac{\| \phi_1 - \phi_2 \|_{\mathcal{B}}}{a_{p,n}}.
\end{align*}
If $\phi_1 \neq \phi_2$, then it follows from~\eqref{cal 9} that the function $n \to \cA(n,p)(\phi_1 - \phi_2)$ is unbounded. However, the right hand side of~(\ref{cal 10}), is bounded by $2\|\eta\|_{\infty}$, which leads to a contradiction. Therefore $\phi_1= \phi_2$ and hence $h^p$ is one-one for all $p \in \mathbb{Z}^+$.

This completes the proof of our Theorem \ref{thm_2}.
\end{proof}

\subsection{\textbf{Uniform Exponential Dichotomy Case:}}\label{exp_dich}
Let us assume that~(\ref{dis lin eq}) admits uniform exponential dichotomy, i.e. for each $n \in \mathbb{Z}^+$, there exists projection $P_n$  such that:
\begin{enumerate}
    \item $ \cA(m,n)P_n = P_m\cA(m,n)$ for all $m \geq n \geq 0 $;
    \item for $m \geq n$, $\cA(m,n)\big|_{\Ker P_n} : \Ker P_n \to \Ker P_m$ is an invertible map and we denote the inverse map $\big(\cA(m,n)\big|_{\Ker P_n}\big)^{-1}$ by $\cA(n,m)$;
    \item there exists $D, \lambda >0$, such that
    \begin{align*}
        \| \cA(m,n)P_n \| \leq D e^{-\lambda(m-n)} \qquad \mbox{ for all } m \geq n; 
    \end{align*}
    and 
    \begin{align*}
        \| \cA(m,n)\big(Id_\mathcal{B} - P_n \big) \| \leq D e^{-\lambda(n-m)} \qquad \mbox{ for all } m < n.
    \end{align*}
\end{enumerate}
In particular, we have that,
\begin{align*}
    \|\mathcal{G}(m,n) \| \leq D  e^{-\lambda|m-n|} \qquad \mbox{ for all } m,n \in \mathbb{Z}^+.
\end{align*}
By comparing the above expression with~(\ref{new cc disc 1}), we have that  $a_{m,n} \leq D e^{-\lambda |m-n|}$. Finally, let $C>0$ be a constant such that $c_m= C$ for all $m \in \mathbb{Z^+} $. Using above estimates, we have
\begin{align*}
   C \sup_{m \in \mathbb{Z}^+} \Big(\sum_{n \in \mathbb{Z}^+} a_{m,n+1} \Big) & \leq  C \sup_{m \in \mathbb{Z}^+} \Big( \sum_{n \in \mathbb{Z}^+} D e^{-\lambda |m-n-1|} \Big)\\ %& = D \Big( \sum_{n = 0}^{ m-2} e^{-\lambda |m-n-1|} + \sum_{n = m-1}^{+\infty} e^{-\lambda |m-n-1|}  \Big) \\
    & = C\,D \sup_{m \in \mathbb{Z}^+}  \Big( \frac{e^{-\lambda }(1- e^{-\lambda(m-1)})}{1- e^{-\lambda}} + \frac{1}{1-e^{-\lambda}} \Big) < \infty.
\end{align*}
Using the constant $C$, we can make sure that equation (\ref{cc disc}) is satisfied. Therefore, if the linear delay difference equation (\ref{dis lin eq}) satisfies uniform exponential dichotomy then our Theorem \ref{thm_2} is applicable.

\paragraph{\textbf{Acknowledgements}} Author would like to thanks Prof. D. Dragičević for his valuable suggestions throughout the process of solving and writing this article. The Author is supported by Croatian Science Foundation under the project IP-2019-04-1239.

%
% ---- Bibliography ----
%


\begin{thebibliography}{6}
%




\bibitem{SM}  Murakami, S.: Representation of solutions of linear functional difference equations in phase space. Nonlinear Anal. 30, pp. 1153-1164 (1997).

\bibitem{DP} Dragi{\v{c}}evi{\'c}, D., and Pituk, M.: Shadowing for nonautonomous difference equations with infinite delay. Applied Mathematics Letters, 120, 107284 (2021).

\bibitem{P}  Pugh, C.: On a Theorem of P. Hartman. American Journal of Mathematics, 91(2), pp. 363–367 (1969).

\bibitem{Palis} Palis J.,: On the local structure of hyperbolic points in Banach spaces. An. Acad. Brasil. Cienc. 40, pp. 263–266 (1968).

\bibitem{HP} Hartman, P.: On the local linearization of differential equations. Proc. Amer. Math. Soc., 14, pp. 568-573 (1963).

\bibitem{GD} Grobman, D.: Homeomorphism of systems of differential equations. Dokl. Akad. Nauk
SSSR, 128, pp. 880-881 (1959).

\bibitem{AR} Reinfelds, A.: A generalized theorem of Grobman and Hartman. Latv. Mat. Ezhegodnik, 29, pp. 84-88 (1985).

\bibitem{BL} Bates, W.,  Lu, K.: A Hartman-Grobman theorem for the Cahn-Hilliard and phase-field equations. J. Dynam. Differential Equations, 6(1), pp. 101-145 (1994).
 
\bibitem{BG}Belitskii, G.R.: Functional equations and the conjugacy of difeomorphism of fnite smoothness class. Funct. Anal. Appl. 7, pp. 268–277 (1973).

\bibitem{BG2} Belitskii, G.R.: Equivalence and normal forms of germs of smooth mappings. Russian Math. Surv. 33, pp. 107–177 (1978).

\bibitem{SS} Sternbger, S.: Local contractions and a theorem of Poincare. Am. J. Math. 79, pp. 809–824 (1957).

\bibitem{EM} ElBialy, M.S.: Smooth conjugacy and linearization near resonant fxed points in Hilbert spaces. Houston J. Math. 40, pp. 467–509 (2014).

\bibitem{RMM}Rodrigues, H.M., Solà-Morales, J.: Invertible contractions and asymptotically stable ODEs that are not C1-linearizable. J. Dyn. Difer. Equ. 18, pp. 961–974 (2006).

\bibitem{PK} Palmer, K.: A generalization of Hartmans linearization theorem. J. Math. Anal. Appl. 41, pp. 753–758 (1973).

\bibitem{AW} Aulbach, B., Wanner, T.: Topological simplifcation of nonautonomous diference equations. J. Differ.
Equ. Appl. 12, pp. 283–296 (2006).

\bibitem{BC} Barreira  L.,  Valls C.: A Grobman–Hartman theorem
for nonuniformly hyperbolic dynamics. J. diff. equ. 228, pp. 285–310 (2006). 

\bibitem{SXK}Shi, J.L., Xiong, K.Q.: On Hartmans linearization theorem and Palmers linearization theorem. J. Math.
Anal. Appl. 192, pp. 813–832 (1995).

%\bibitem{LP}Lopez-Fenner, J., Pinto, M.: On a Hartman linearization theorem for a class of ODE with impulse efect. Nonlinear Anal. 38, 307–325 (1999)

%\bibitem{JL}Jiang, L.: Generalized exponential dichotomy and global linearization. J. Math. Anal. Appl. 315, 474–490 (2006)

\bibitem{DZZ} Dragičević, D., Zhang, W., Zhang, W.: Smooth linearization of nonautonomous diference equations
with a nonuniform dichotomy. Math. Z. 292, pp. 1175–1193 (2019).

\bibitem{DZZ2} Dragičević, D., Zhang, W., Zhang, W.: Smooth linearization of nonautonomous diferential equations
with a nonuniform dichotomy. Proc. Lond. Math. Soc. 121, pp. 32–50 (2020).

\bibitem{DB} Backes, L., Dragičević, D.: A generalized Grobman–Hartman theorem for nonautonomous dynamics. Collect. Math. (2021).

\bibitem{DBV} Barreira, L., Dragičević, D., Valls C.:Existence of conjugacies and stable manifold theorem via suspensions. Elec. J. of Diff. Equa. Vol. 2017, No. 172, pp. 1–11 (2017). 

\bibitem{FG} Farkas G.: A Hartman-Grobman result for retarded functional differential equations with an application to the numerics around hyperbolic equilibria, Z. Angew. Math. Phys. 52, pp. 421-432 (2001). 

\bibitem{BV} Barreira, L., Valls, C.: Perturbations of delay equations. J. Differ. Equ. 269,pp. 7015-7041 (2020).

\bibitem{BE} Benkhalti, R., Ezzinbi, K.: A Hartman and Grobman theorem for some partial functional differential equations, Int. J. of Bif. and Cha., Vol. 10, No. 5, pp. 1165–1169(2000).

\bibitem{SN} Sternberg, N.: A Hartman-Grobman Theorem for a Class of Retarded Functional Differential Equations, J. Math. Anal. Appl. 176, pp. 156-165 (1993).

\bibitem{SN2} Sternberg, N.: A Hartman-Grobman theorem for maps, in: Ordinary and Delay Differential Equations, Edinburg,
TX, 1991, in: Pitman Res. Notes Math. Ser., vol. 272, Longman Sci. Tech., Harlow,  pp. 223–227 (1992).

\bibitem{DD} Dragičević, D.: Global smooth linearization of nonautonomous contractions on Banach spaces,
Electron. J. Qual. Theory Differ. Equ., Paper No. 12, pp. 1-19 (2022).

\bibitem{CR} A. Castañeda, G. Robledo,: Differentiability of Palmer’s linearization theorem and converse result for density function, J. Differential Equations 259, pp. 4634–4650 (2015).

\bibitem{ZZ1} W. M. Zhang, W. N. Zhang,: C1 linearization for planar contractions, J. Funct. Anal. 260, No. 7, 2043–2063 (2011).

\bibitem{ZZ2} W. M. Zhang, W. N. Zhang, Sharpness for C1 linearization of planar hyperbolic diffeomorphisms, J. Differential Equations 257, pp. 4470–4502 (2014).

\bibitem{ZLZ} W. M. Zhang, K. Lu, W. N. Zhang,: Differentiability of the conjugacy in the Hartman-Grobman Theorem, Trans. Amer. Math. Soc. 369, 4995-5030 (2017).

\end{thebibliography}
\end{document}